\documentclass[12pt,a4paper]{article}

\usepackage[T1]{fontenc}
\usepackage[english]{babel}
\usepackage[utf8]{inputenc}
\usepackage[pdftex]{graphicx}           
\usepackage{setspace}                   
\usepackage{indentfirst}                
\usepackage{makeidx}                    
\usepackage{courier}                    
\usepackage{type1cm}                    
\usepackage{titletoc}
\usepackage[font=small,format=plain,labelfont=bf,up,textfont=it,up]{caption}
\usepackage[usenames,svgnames,dvipsnames]{xcolor}
\usepackage[a4paper,top=3cm,bottom=2.0cm,left=2.54cm,right=2.54cm]{geometry} 
\usepackage[pdftex,plainpages=false,pdfpagelabels,pagebackref,colorlinks=true,citecolor=red,linkcolor=NavyBlue,urlcolor=DarkRed,bookmarksopen=true]{hyperref}
\usepackage[all]{hypcap} 
\fontsize{60}{62}\usefont{OT1}{cmr}{m}{n}{\selectfont}
\usepackage{dsfont}
\usepackage{amsmath}
\usepackage{amsthm,amssymb}
\usepackage{mathabx}

\usepackage{amsfonts}
\usepackage{relsize}
\usepackage{mathrsfs} 
\usepackage{pgf,tikz}
\usepackage{mathabx}
\usetikzlibrary{arrows}
\usepackage{enumitem}
\usepackage{genealogytree}

\usepackage{bbold}

\usepackage{bbm}

\setlist[description]{font=\normalfont\itshape\textbullet\space}
\usepackage{fancyhdr}
\graphicspath{{./figuras/}}             
\makeindex                              
\fontsize{60}{62}\usefont{OT1}{cmr}{m}{n}{\selectfont}
\cleardoublepage
\normalsize

\def\Z{{\mathbb Z}}

\def\1{{\mathbbm 1}}

\def\cR{{\mathcal R}}

\def\B{{\mathcal B}}
\def\C{{\mathcal C}}

\def\cX{{\mathcal X}}
\def\cY{{\mathcal Y}}
\def\cE{{\mathcal E}}
\def\G{{\mathcal G}}
\def\T{{\mathcal T}}
\def\W{{\mathcal W}}

\def\cP{{\mathcal P}}

\def\cN{{\mathcal N}}
\def\F{{\cal F}}

\def\sx{S^{x}}
\def\nx{N^{x}}

\def\ve{\varepsilon}

\def\Pr{{\mathbb P}}

\def\E{{\mathbb E}}

\def\X{\mathcal{X}}

\def\xd{X^{(\d)}}

\def\hxd{\hat X^{(\d)}}

\def\o{\omega}

\def\d{\delta}
\def\a{\alpha}

\def\l{\lambda}

\def\s{\sigma}

\def\vf{\varphi}

\def\ga{\gamma}

\def\={&=&}

\definecolor{title}{RGB}{255,0,90}
\definecolor{def}{rgb}{0.0, 0.28, 0.67}
\definecolor{bege}{rgb}{0.66, 0.637, 0.492}
\definecolor{bpink}{rgb}{0.95,0.82,0.69}
\definecolor{xdxdff}{rgb}{0.49,0.49,1}
\definecolor{qqtttt}{rgb}{0,0.2,0.2}
\definecolor{qqwwcc}{rgb}{0,0.4,0.8}
\definecolor{ffttww}{rgb}{1,0.2,0.4}
\definecolor{qqqqff}{rgb}{0,0,1}
\definecolor{zzttqq}{rgb}{0.6,0.2,0}
\definecolor{ttfftt}{rgb}{0.2,1,0.2}
\definecolor{ttffqq}{rgb}{0.2,1,0}
\definecolor{qqqqff}{rgb}{0,0,1}
\definecolor{qqwwff}{rgb}{0,0.4,1}
\definecolor{ttzzff}{rgb}{0.2,0.6,1}
\definecolor{qqffqq}{rgb}{0,1,0}
\definecolor{fftttt}{rgb}{1,0.2,0.2}
\definecolor{ffqqww}{rgb}{1,0,0.4}
\definecolor{xdxdff}{rgb}{0.49,0.49,1}
\definecolor{cqcqcq}{rgb}{0.75,0.75,0.75}

\newtheorem{teorema}{Theorem}
\newtheorem{lema}{Lemma}[section]
\newtheorem{Lema}{Lemma}[subsection]

\newtheorem{observacao}[teorema]{Remark}
\newtheorem{Remark}[Lema]{Remark}
\newtheorem{proposicao}[teorema]{Proposition}

\author{Luiz Renato Fontes \footnote{IME-USP, Rua do Mat\~ao 1010, 05508-090 S\~ao Paulo SP,  Brazil, lrfontes@usp.br} \thanks{Supported in part by CNPq grants 309938/2025-2 and 408590/2024-6, and FAPESP grant 2023/13453-5}
	\and Pablo A.~Gomes \footnote{IME-USP, Rua do Mat\~ao 1010, 05508-090 S\~ao Paulo SP,  Brazil, pagomes@usp.br} \thanks{Supported in part by FAPESP  grants 2023/13453-5 and 2025/27064-6} }
\title{Random walk in a null recurrent birth-and-death dynamical environment} 

\date{}

\begin{document}
	
	\maketitle


    	\begin{abstract}

		We study a random walk in $\Z^d$ in a dynamical random environment constituted of independent 
		null recurrent birth-and-death chains, and establish a scaling limit result for the position of the walker
		in terms of a sublinear time change of Brownian motion, thus establishing subdiffusivity, in contrast to a
		diffusivity result established previously in an ergodic version of the model.

	\end{abstract}

	\noindent AMS 2020 Subject Classifications: {60K37, 60F05}

	\smallskip

	\noindent Keywords and Phrases: {random walk in random environment, dynamical environment, birth-and-death environment, null recurrent environment, scaling limit, subdiffusivity}


\section{Introduction}
\label{intro}

This article may be seen as  a report on a followup investigation on the continuous time random walk in dynamical random environment studied in~\cite{FGP}.
As in that paper, we have an embedded (discrete time) mean zero random walk on $\Z^d$, which we presently take to be simple and symmetric 
for simplicity. The environment is also given by a family of independent and equally distributed birth-and-death processes indexed by $\Z^d$, but now,
in contrast to~\cite{FGP}, they are not ergodic; for simplicity, we consider spatially homogeneous transitions only (off the origin) for the environment.
The jump rate of the random walk at space time point $(x,t)$ is again given by a decreasing function of the environment at $(x,t)$, which we now take to have a specific form, with a specific decay, namely, (in the notation of~\cite{FGP}), $\varphi(x)=(x+1)^{-\a}$, with $\a\in(0,2)$ for null recurrent environments\footnote{which is the main case we study here, but we do discuss the (simpler) transient case of the environment, briefly at the end of the paper, where we have $\a\in(0,1)$ instead}.

Our main result, Theorem~\ref{main}, stated in the next section for the null recurrent case, is the scaling limit of the suitably rescaled position of the walker; the limit is a time changed Brownian Motion, with the time change a polynomial function of time, with degree less than one, amounting to a subdiffusive process, in contrast to the usual/diffusive CLT obtained in~\cite{FGP}, under 'strong' ergodicity (according to the tentative terminology adopted in that paper). Our strategy requires that the dimension be at least five. There is no dimension restriction in the results
of~\cite{FGP}, as well as, as briefly discussed at the end of the paper, for the transient version of
our environment.

A natural motivation to our study of random walks in birth-and-death dynamical environments is to provide a dynamical version of Bouchaud’s trap model, originally introduced in~\cite{B92}, where the model describes a continuous-time random walk on $\mathbb{Z}^d$, whose embedded chain is also an ordinary random walk, but whose jump rates are spatially inhomogeneous and determined by an iid random field representing traps. In the regime where the inverses of the jump rates have heavy-tailed distributions, the random walk exhibits subdiffusive behavior, as well as the phenomenon of aging, as analyzed in~\cite{BAC} and~\cite{FIN02}. The aim of our main result described above, is to show how a non-ergodic dynamical environment can lead to subdiffusive behavior of the random walk. Due to the non-ergodic nature of the environment, the walker may remain influenced by traps over certain time scales. This motivates an appropriate rescaling of time, which allows us to better understand the effect of the traps on the behavior of the random walk.

The literature on random walks in random environments is quite large, with many papers dealing with several classes of Markovian environments (see, for example, \cite{BZ06, BMP97, BMPZ, DKL08, RV13}). There is also a literature that considers interacting particle systems as the environment and jump rates governed by the configuration in a neighborhood of the walker (e.g., \cite{dHdS, MV, BHT, HKT}). Many of these works allow for correlations in the environment, going well beyond our independence assumption, and often assuming an ellipticity condition on the jump rates, which we do {\em not} require. For a more extended list references and a discussion on the literature see~\cite{FGP}. We are not aware of studies on similar models in Markovian null recurrent or transient environments. 

This paper is organized as follows. In the next section we set up the model in detail and formulate our main result. In the following section, we argue that a proof of that result follows readily from a law of large numbers on the suitably scaled sequence of jump times of the random walk.
The bulk of the paper is the proof of the law of large numbers, whose strategy and overall ideas are presented at the beginning of Section~\ref{proof-prop}, with detailed arguments in the following subsections.



\renewcommand{\thefootnote}{\fnsymbol{footnote}}

\section{Model and result}
\label{mores}

\setcounter{equation}{0}

Let $H:=\{(\eta^x_t)_{t\geq0},\,x\in\Z^d\}$ be an independent family of symmetric, rate 1, birth-and-death processes starting from the origin. For $x\in\Z^d$, we write $\eta^x_t$ as
\begin{equation}\label{rep}
	\eta^x_t=\big|\sx_{\nx_t}\big|,\,t\geq0,
\end{equation}
where $\{(\sx_n)_{n\geq0},\,x\in\Z^d\}$ is a family of independent simple symmetric random walks in $\Z$, 
and $\{(\nx_t)_{t\geq0}, \,x\in\Z^d\}$ is a family of independent rate 1 Poisson counting processes, independent of the random walk family.

Given $H$ as above, consider a Markov jump process $X=(X_t)_{t\geq0}$ starting at the origin, whose jump chain $(\X_n)_{n\geq0}$ is a  simple symmetric random walk in $\Z^d$, and whose (space-time inhomogeneous) jump rates are given by
\begin{equation}\label{la}
	\l^x_t=\vf(\eta^x_t)=(\eta^x_t+1)^{-\a},
\end{equation}
with $\a\in(0,2)$.

\begin{teorema}\label{main}
	Let $d\geq5$, and, for $\d>0$, set $\xd=(\xd_t)_{t\geq0}$, where 
	\begin{equation}\label{xd}
		\xd_t=\d^{\frac{2-\a}{4}}X_{\d^{-1}t},\, t\geq0.
	\end{equation}
	Then for a.e.~realization of the environment
	\begin{equation}
		\xd\Rightarrow
		\big(B_{t^{1-\frac{\a}2}}\big)_{t\geq0} \text{ as } \d\to0,
	\end{equation}
    in the finite dimensional sense, 
	where 
	$(B_t)_{t\geq0}$ is Brownian motion with diffusion coefficient $1/\mu$, with $\mu\in(0,\infty)$ to be specified below ---
	see~\eqref{c15}. 
\end{teorema}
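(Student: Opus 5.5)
The plan is to write $X_t=\X_{K_t}$, where $K_t$ is the number of jumps by time $t$. Let $T_n$ be the time of the $n$-th jump, so $T_n=\sum_{i=1}^n \tau_i$ with
\[
\tau_i=\inf\Big\{s>0:\int_{T_{i-1}}^{T_{i-1}+s}\l^{\X_{i-1}}_u\,du\ge E_i\Big\},
\]
where the $E_i$ are i.i.d.\ mean one exponentials. The theorem should follow from a law of large numbers for the jump times of the form
\begin{equation*}
\d^{\frac{2-\a}{2}}\,T_{\lfloor \d^{-1}t\rfloor}\;\longrightarrow\;\mu\, t^{\frac{2}{2-\a}}\cdot c
\end{equation*}
(or an equivalent normalization). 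Equivalently, after inverting,
\[
\d^{\frac{2-\a}{2}}K_{\d^{-1}t}\to t^{1-\frac{\a}{2}}/\mu
\]
a.s.\ for a.e.\ environment, for each fixed $t$. Given this, write
\[
\xd_t=\d^{\frac{2-\a}{4}}\X_{K_{\d^{-1}t}}.
\]
Donsker's theorem for the jump chain, which is independent of $H$ given the construction, then gives convergence of $\d^{\frac{2-\a}{4}}\X_{\lfloor \d^{-(2-\a)/2}s\rfloor}$ to a Brownian motion (diffusion coefficient normalized by $d$). Composing it with the deterministic limiting time change $s=t^{1-\a/2}/\mu$, and using uniform continuity on compacts, yields finite dimensional convergence to $B_{t^{1-\a/2}}$ with coefficient $1/\mu$. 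This is a standard random time change argument once the time change converges in probability.

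The heuristic for the scaling is as follows. Since $d\ge5$, the walk visits each site only finitely many times and returns rarely, so essentially it meets fresh environments. The environment at a typical site at time $s$ has $\eta\sim\sqrt s$, so the jump rate is about $s^{-\a/2}$. Hence
\[
dK\approx s^{-\a/2}\,ds,\qquad K_t\asymp t^{1-\a/2},
\]
which matches the exponent. To prove the law of large numbers I would decompose $T_n$ into contributions from blocks of jumps. For the $i$-th jump at time $T_{i-1}\approx s$, the conditional mean of $\tau_i$ given the past, for a site not previously visited, is $\E[\text{holding time}\mid \eta_s]$, where $\eta_s$ is distributed as $|S_{N_s}|$. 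I would show that
\[
s^{-\a/2}\,\E[\tau\mid \text{fresh site at time } s]\to\mu_0,
\]
computing $\mu_0$ as an expectation involving reflected Brownian motion started from $|Z|\sqrt{\cdot}$; note that the holding time is short relative to $s$, so $\eta$ barely moves. This gives $\E T_n\sim c\,n^{2/(2-\a)}$ through the ODE $dT/dn=\mu_0 T^{\a/2}$, and this ODE defines $\mu$.

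Concentration would be obtained through a martingale decomposition
\[
T_n=\sum_i \E[\tau_i\mid\F_{i-1}]+M_n .
\]
The variance of $M_n$ is controlled by second moments of the holding times. These are finite, since with rate at least $(\eta+1)^{-\a}$ and $\eta$ growing only diffusively the holding time has all moments of order $s^{\a/2}$ up to polynomial factors. From this I would get $\mathrm{Var}(M_n)\ll (\E T_n)^2$, and Borel--Cantelli along geometric subsequences $n=\lfloor \rho^k\rfloor$, combined with monotonicity of $T_n$, upgrades this to almost sure convergence.

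The main obstacle is the effect of revisits. When the walker returns to a site $x$, the environment $\eta^x$ is not fresh: it has been conditioned by the earlier holding time, which biases it toward large values. More seriously, the walker can be trapped repeatedly at high-$\eta$ sites, and an early visit at time $s$ followed by a return at time $s'$ correlates the terms. To handle this I would bound the total contribution of jumps from previously visited sites. The number of returns in $n$ steps is $O(1)$ in expectation in $d\ge3$, but a return can involve a long time gap and a conditioned environment. I expect that $d\ge5$ is needed precisely to make the covariance between holding times at the same site summable: one needs the return probability decay $n^{-d/2}$ to beat the polynomial growth $n^{\a/(2-\a)}\lesssim n$ of the time scales. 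Concretely, I would show
\[
\sum_{i<j}\Pr(\X_i=\X_j)\,\E[\tau_i\tau_j]=o\big((\E T_n)^2\big)
\]
using $\sum_k k^{-d/2}k^{\text{small}}<\infty$. I would also show that the conditional law of $\eta$ at a revisit, given the past, is stochastically dominated by a fresh law shifted by $O(\sqrt{\text{elapsed}})$, so that revisit terms contribute negligibly to both the mean and the variance.
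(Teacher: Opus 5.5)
\textbf{Verdict: there is a genuine gap, in the treatment of revisits.}

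Your outer reduction agrees with the paper's reduction of the theorem to the law of large numbers for jump times: an LLN for the jump count, then Donsker for the jump chain, then a random time change. The constant your ODE would produce, $\mu=\gamma^{-1}\mathbb{E}|\mathcal{N}|^{\alpha}$ with $\gamma=2/(2-\alpha)$, is also the right one. (Your first display has the exponent inverted; it should read $\delta^{\gamma}T_{\lfloor\delta^{-1}t\rfloor}\to(\mu t)^{\gamma}$.)

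The gap is this. You claim the number of returns in $n$ steps is $O(1)$, but that is true only for returns to a fixed site, not returns to the range.
\begin{itemize}
\item The number of steps $i\le n$ with $\mathcal{X}_i\in\mathcal{R}_{i-1}$ is $n+1-|\mathcal{R}_n|\sim(1-p_d)n$. This is a positive fraction of all jumps ($1-p_d\approx0.135$ in $d=5$).
\item Each such holding time is of the same order $T_{i-1}^{\alpha/2}$ as a fresh one, so revisit terms carry a fixed fraction of $T_n$ and cannot be shown negligible in the mean.
\item Your proposed domination points the wrong way. As you note yourself, conditioning on a long earlier holding time at $x$ biases $\eta^x$ upward. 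So its conditional law given the past is not dominated by the fresh law plus an $O(\sqrt{\text{elapsed}})$ shift.
\item Even a correct one-sided bound could not fix the constant for a positive fraction of the steps.
\end{itemize}
Consequently, in $T_n=\sum_i\mathbb{E}[\tau_i\mid\mathcal{F}_{i-1}]+M_n$ the compensator is a deterministic function of $T_{i-1}$ only at fresh steps. At revisit steps it is random, depending on the past of $\eta^x$, and nothing in your plan shows that the sum of compensators follows your ODE.

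Your ODE gives the right answer not because revisits are negligible. It does so because revisits see essentially the same, unbiased rate as the first visit to the site, and proving this is the core of the paper's argument. The paper organizes holding times by site rather than by step:
\begin{itemize}
\item It works in the clock $Y_t=X_{t^{\gamma}}$, where rates are of order one.
\item First-arrival times at new sites grow linearly, $\hat\tau_n\ge an$ off exponentially small events.
\item The whole window between first and last visits to the $n$-th new site is at most $\hat\tau_n^{\delta'}$ with $\delta'<1$, off a summable event. This is where $d\ge5$ enters: $\Pr(\kappa_n>\ell)\asymp\ell^{-(d-2)/2}$ must be summable at $\ell=n^{\delta}$ with $\delta<1$.
\item Over such a window, Brownian scaling shows the normalized environment $(\eta+1)/\sqrt{t}$ at the site is frozen, up to $\varepsilon$. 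Its value is distributed as $|\mathcal{N}|$, independently of the arrival time and across sites.
\item So the total time spent at a site is sandwiched between $\gamma^{-1}(V^{\mp})^{\alpha}$ times a sum of $G_x$ independent exponentials, with $(G_x)$ stationary ergodic.
\item The ergodic theorem together with $|\mathcal{R}_n|/n\to p_d$ then gives the LLN, with no ODE step needed.
\end{itemize}

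Within your framework, the repair would be to reveal each site's normalized environment at its first visit. The compensator is then roughly $\sum_i T_{i-1}^{\alpha/2}|\mathcal{N}_{\mathcal{X}_{i-1}}|^{\alpha}$, and you would prove an LLN for that sum by exactly this site decomposition.

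Your explanation of why $d\ge5$ is needed also does not hold up. The covariance sum you write is already finite for $d\ge3$, and the bound $n^{\alpha/(2-\alpha)}\le n$ fails for $\alpha>1$.
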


\begin{observacao}\label{fidi}
	The above convergence ought to hold in a stronger sense as well, namely, in Skorohod space, at least, conceivably, at high dimensions, but we avoid a stronger statement in order to not go too deep into technicalities involved in verifying standard 
	criteria for such kinds of convergence, which, we expect, would mostly involve standard arguments.
\end{observacao}



\section{Proof of Theorem~\ref{main}}
\label{proofs}

\setcounter{equation}{0}

Our setup allows for a time change which makes the environment of  "order 1", as follows.
Let $W^x_0=0$, $W_x(t)=(\eta^x_t+1)/\sqrt t$, $t>0$, $x\in\Z^d$, and set
\begin{equation}\label{y}
	Y_t=X_{t^\ga},\,t\geq0,
\end{equation}
where $\ga=\frac2{2-\a}$.

\begin{lema}\label{version}
	Given $H$, $(Y_t)_{t\geq0}$ is a Markov jump process starting at the origin, with jump chain $(\X_\ell)_{\ell\geq0}$, and instantaneous jump rates at $(x,t)$ given by $\rho^x_t:=\ga (W^x_{t^\ga})^{-\a}$, $t\geq0$, $x\in\Z^d$.
\end{lema}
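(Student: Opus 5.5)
The plan is to work conditionally on $H$, so that the environment is a fixed (deterministic) function of space and time. I will use the standard construction of a time-inhomogeneous Markov jump process from its jump chain and a sequence of i.i.d.\ standard exponential clocks. Because $t\mapsto t^\ga$ is a deterministic, continuous, strictly increasing bijection of $[0,\infty)$ fixing $0$, the process $Y$ visits exactly the same sites as $X$, in the same order. Only the jump times are transformed, by $T\mapsto T^{1/\ga}$. Hence $Y$ is Markov given $H$ and has jump chain $(\X_\ell)_{\ell\ge0}$, and it remains to identify its rates. Note that $Y_0=X_0=0$.

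First, I realize $X$ given $H$ as follows. Take $(E_n)_{n\ge1}$ i.i.d.\ Exp$(1)$, independent of $(\X_\ell)$ and of $H$. Set $T_0=0$, and define $T_n$ recursively by
\[
\int_{T_{n-1}}^{T_n}\l^{\X_{n-1}}_s\,ds=E_n .
\]
Then $X_t=\X_{n}$ for $t\in[T_n,T_{n+1})$. This is precisely the (time-inhomogeneous, given $H$) Markov jump process with rates $\l^x_t$. For $T_n$ to be well defined and finite, the integral must diverge over $[T_{n-1},\infty)$. This holds because $\l^x_s=(\eta^x_s+1)^{-\a}$ is positive and piecewise constant, and a.s.\ $\eta^x_s=O(\sqrt{s\log\log s})$. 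Consequently $\int^\infty \l^x_s\,ds\ge\int^\infty (C\sqrt{s\log\log s})^{-\a}ds=\infty$, using $\a<2$; this holds simultaneously for all $x$ a.s.

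Second, set $S_n:=T_n^{1/\ga}$, so that $Y_t=\X_n$ for $t\in[S_n,S_{n+1})$. Substituting $s=u^\ga$, $ds=\ga u^{\ga-1}du$, gives
\[
E_n=\int_{S_{n-1}}^{S_n}\ga u^{\ga-1}\big(\eta^{\X_{n-1}}_{u^\ga}+1\big)^{-\a}du .
\]
I then check that the integrand equals $\rho^{\X_{n-1}}_u$. Indeed,
\[
\ga\,(W^x_{u^\ga})^{-\a}=\ga\,u^{\ga\a/2}(\eta^x_{u^\ga}+1)^{-\a},
\]
and $\ga\a/2=\frac{\a}{2-\a}=\ga-1$. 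Therefore $(S_n,\X_n)$ is generated from the same clocks $E_n$ by the same recipe, with rate function $\rho$ in place of $\l$. This is exactly the construction of the Markov jump process with jump chain $(\X_\ell)$ and instantaneous rates $\rho^x_t$. The divergence of $\int^\infty\rho$ follows from that of $\int^\infty\l$ by the same substitution.

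The only delicate point is justifying that this clock construction characterizes the law of a time-inhomogeneous jump process given its rates, in particular non-explosion and the use of non-homogeneous exponential holding times. I would handle it by noting that $\rho^x_u$ is locally bounded in $u$ for each $x$: $\eta^x$ has finitely many jumps on compacts and $u^{\ga-1}$ is bounded on compacts. Moreover, on any finite time interval the sum of the $E_n$ diverges while the integrated rate stays finite. So $S_n\to\infty$ a.s., and the Markov property and the rates can be read off from the infinitesimal transition probabilities $P(Y_{t+h}\ne Y_t\mid Y_t=x,H)=\rho^x_th+o(h)$ at continuity points of $\rho^x$.
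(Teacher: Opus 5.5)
Your argument is correct, and its core is the same as the paper's: the substitution $s=u^\ga$ in the integrated rate, with $\ga u^{\ga-1}\l^x_{u^\ga}=\rho^x_u$ because $\ga\a/2=\ga-1$. What differs is the construction to which the substitution is applied.

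The paper uses a Harris construction with one Poisson point process $\cP^x$ of intensity $\l^x_\cdot$ per site. It asserts that the image of $\cP^x$ under the time change is again Poisson, and identifies the intensity through the mean counts $\int_{r^\ga}^{s^\ga}\l^x_t\,dt=\int_r^s\rho^x_t\,dt$. This yields directly the site-indexed processes $\tilde\cP^x$ used right afterwards in \eqref{sn}. The price is that it relies on the mapping theorem and on the fact that a Poisson process is determined by its mean measure.

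You instead use one exponential clock per jump and invert the integrated hazard. Then $E_n=\int_{S_{n-1}}^{S_n}\rho^{\X_{n-1}}_u\,du$ holds pathwise, and no distributional fact about Poisson processes is needed. This is precisely the representation the paper itself adopts later in \eqref{c5}. You also handle finiteness of the jump times inside the proof, via the law of the iterated logarithm and $\a<2$; the paper's proof leaves this aside and treats it only in Remark~\ref{taufin}.

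One small fix in your non-explosion step. Boundedness of $\rho^x_\cdot$ on compacts for each fixed $x$ would not by itself exclude explosion along a path visiting infinitely many sites. Also, the finiteness of the number of jumps of $\eta^x$ is not what bounds the rate. The relevant fact is the bound $\rho^x_u\le\ga u^{\ga-1}$, which is uniform in $x$ since $\eta^x\ge0$. More simply still, $\l^x_t\le1$ gives $T_n\ge E_1+\dots+E_n\to\infty$, hence $S_n=T_n^{1/\ga}\to\infty$.
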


\begin{proof}[\textbf{Proof}] This is a straightforward argument, which we outline for completeness. 

That $(Y_t)$ inherits the Markovian character and jump chain from $(X_t)$ is quite clear. 
It remains to characterize the jump times of $(Y_t)$, which we will relate to those of $(X_t)$, as follows. The latter may be obtained,
via  a standard Harris graphical construction, from the family $\{ \cP^x,\,x\in\Z^d\}$ of independent Poisson point processes
on $[0,\infty)$ with intensity function $\l^x_\cdot$, respectively. 

It is now enough to check that for each $x\in\Z^d$, $\cP^x$ is mapped, by the transformation $t\to t^\ga$, to $\tilde\cP^x$, a  Poisson point processes
on $[0,\infty)$ with intensity function $\ga (W^x_{t^\ga})^{-\a}$, $t\geq0$.

That such a transformation maps a Poisson point process to a Poisson point process is quite clear, so it is enough to obtain 
the expected value of the occupation random variables of $\tilde\cP^x$, namely, $\E(\#(r,s))$, $0\leq r<s<\infty$, where
\begin{equation}\label{occ}
	\#(r,s)=\big|[r,s]\cap\tilde\cP^x\big|,
\end{equation}
where $|\cdot|$ means cardinality of $\cdot$. 

But the right hand side of~\eqref{occ} equals $\big|[r^\ga,s^\ga]\cap\cP^x\big|$, and 
\begin{equation}\label{eocc}
	\E\big\{\big|[r^\ga,s^\ga]\cap\cP^x\big|\big\}=\int_{r^\ga}^{s^\ga}\l^x_t dt.
\end{equation} 
We conclude by resorting to an obvious change of variables in the  integral to find that it equals $\int_{r}^{s} \rho^x_t\,dt$.
\end{proof}

\bigskip

Now, given $(\X_\ell)_{\ell\geq0}$ and $\{ \tilde\cP^x,\,x\in\Z^d\}$, we obtain the successive jump times of $(Y_t)_{t\geq0}$ as follows.
Set $\s_0=0$, and for $\ell\geq1$, given that $\X_{n-1}=x$, let
\begin{equation}\label{sn}
	\s_\ell=\inf\{t>\s_{\ell-1}:\,t\in\tilde\cP^x\}
\end{equation}
be the smallest point of $\tilde\cP^x$ greater than $\s_{\ell-1}$. We may then write

\medskip

\begin{equation}\label{xtg}
	Y_{t}=\sum_{\ell\geq0}\X_\ell\1\big\{t\in[\s_\ell,\s_{\ell+1})\big\}
	=\sum_{i=1}^{M_t}\xi_i,
\end{equation}
where $(M_t)_{t\geq0}$ is the counting process associated to $(\s_\ell)_{\ell\geq1}$, namely,
\begin{equation}\label{mt}
	M_t=\sup\{\ell\geq0:\,\s_\ell\leq t\},\,t\geq0,
\end{equation}
and
$\xi_i,\,i\geq1,$ represent the iid jumps of $(\X_\ell)_{\ell\geq0}$ (so that 
$\X_\ell=\sum_{i=1}^\ell\xi_i$, $\ell\geq1$).

\medskip

The proof of Theorem~\ref{main} follows readily 
from the next result.

\begin{proposicao}\label{lln}
	If $d\geq5$, then for a.e.~realization of the environment,
	\begin{equation}\label{eq:lln}
		\frac{\s_\ell}{\ell}\to\mu
	\end{equation}
	in probability as $\ell\to\infty$, where $\mu\in(0,\infty)$ will be specified below ---
	see~\eqref{c15}. 
\end{proposicao}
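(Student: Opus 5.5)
The plan is to write $\s_\ell=\sum_{i=1}^\ell \tau_i$ with $\tau_i=\s_i-\s_{i-1}$, compare each holding time with an idealized exponential holding time in a \emph{fresh} environment, and prove the law of large numbers by a second moment argument.

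\emph{Identification of $\mu$.} After the time change of Lemma~\ref{version}, by~\eqref{rep} and Donsker's theorem, at a large time $s$ we have $W^x_{s^\ga}=(|\sx_{\nx_{s^\ga}}|+1)/s^{\ga/2}\approx|Z|$ in law, with $Z$ standard normal. Over a stretch of $O(1)$ of $Y$-time after $s$, the original clock advances by $O(s^{\ga-1})$. Hence $\eta^x$ moves by $O(s^{(\ga-1)/2})$, which is $o(s^{\ga/2})$, so the rate $\rho^x$ is essentially frozen during one holding period. A fresh site visited at a large time therefore yields a holding time that is approximately exponential with rate $\ga|Z|^{-\a}$. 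Its mean is $\ga^{-1}\E|Z|^\a$, and this is the candidate value of $\mu$ (presumably~\eqref{c15}). It is finite and positive because $\a<2$ and $Z$ has Gaussian tails. Large $W$ gives slow rates, but only with Gaussian-small probability. Small $W$ gives fast rates, which only shorten holding times.

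\emph{Annealed first moment.} I will show that $\E(\s_\ell)/\ell\to\mu$, where $\E$ is over environment, walk and Poisson clocks. Two points need control. The first is the early times, where $s$ is not large; since $\s_\ell\to\infty$, they contribute $o(\ell)$. The second is the correction from revisits: when $\X_i$ is a site already visited, its environment is correlated with the past holding times there. Since the embedded walk is transient in $d\ge5$, the number of returns is $O(1)$ per step in expectation. I still need to show that revisits do not inflate the mean via correlations. For this I will bound moments of $W^\a$ uniformly, using Gaussian-type bounds for $|S_n|/\sqrt n$ together with uniform integrability.

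\emph{Concentration and quenched statement.} Decompose
\begin{equation*}
\mathrm{Var}(\s_\ell)=\E\big[\mathrm{Var}(\s_\ell\mid H)\big]+\mathrm{Var}\big(\E[\s_\ell\mid H]\big).
\end{equation*}
\begin{itemize}
\item For the first term, given $H$ and the path, the holding times are nearly conditionally independent, so this term is $O(\ell)$.
\item For the second term, write $\E[\s_\ell\mid H]$ as an average over the path of functionals of the environments at the visited sites. Expanding the variance with two independent copies $\X,\X'$ of the walk, the environments $\eta^x,\eta^y$ are independent for $x\ne y$, so covariances arise only from pairs $(i,j)$ with $\X_i=\X'_j$. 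The expected number of such intersections is $\sum_x G(x)^2$, which is finite exactly when $d\ge5$; this is where the dimension restriction enters.
\end{itemize}
I expect this to give $\mathrm{Var}(\E[\s_\ell\mid H])=O(\ell^{2-\epsilon})$, or even $O(\ell)$, using decay of correlations between $W^x$ at times $i$ and $j$ when these are far apart. Then along $\ell_k=\lfloor \theta^k\rfloor$ Chebyshev and Borel–Cantelli give $\E[\s_{\ell_k}\mid H]/\ell_k\to\mu$ for a.e.~$H$. Monotonicity of $\s_\ell$ interpolates between the $\ell_k$, and letting $\theta\downarrow1$ gives the full sequence. Combining this with the quenched variance bound gives convergence in probability for a.e.~environment.

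\emph{Main obstacle.} The hard step is the quenched covariance estimate. The walker's path and the times of visits are themselves determined by the environment, since holding times depend on $H$. Intersection counts of two copies are therefore not simply those of independent simple random walks in time; the copies are synchronized in the jump index but not in real time. I would first handle this by working in the jump-index parametrization, since the jump chains are independent of $H$. Time enters only through $\s_{i-1}$, and I would control it a priori by crude bounds: $\s_i\le Ci$ with high probability from the moment bounds. The covariance of the environment at a common site, seen at two different times, would then be bounded using the explicit birth–death (reflected random walk) representation~\eqref{rep}.
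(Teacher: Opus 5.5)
\textbf{There is a genuine gap, at the point you yourself flag as delicate.} Your value $\mu=\ga^{-1}\E|\cN|^\a$ and the heuristic behind it agree with the paper. But you only show that the rate is frozen during \emph{one} holding period, and the argument needs it frozen across \emph{all} visits of the walker to a site.

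\emph{The first moment.} Since $R_\ell/\ell\to p_d$, a fraction $1-p_d$ of the steps land on already visited sites, so revisits are not a lower-order correction. To get $\E\s_\ell/\ell\to\mu$ you must know that the rate seen at a revisit has asymptotically the same law $\ga|\cN|^{-\a}$ as at the first visit, even though the return time depends on the earlier holding times there. Uniform moment bounds and uniform integrability let you pass from convergence in law of the environment seen at a step to convergence of the mean holding time. At revisit steps, however, that convergence in law is exactly what is not established.

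\emph{The variance.} The same issue undermines this part. Your bullet for $\E[\mathrm{Var}(\s_\ell\mid H)]$ treats only the clocks given $(H,\X)$. It omits the path-fluctuation term $\E[\mathrm{Var}(\E[\s_\ell\mid H,\X]\mid H)]$. This term and $\mathrm{Var}(\E[\s_\ell\mid H])$ both become tractable only once $\E[\s_\ell\mid H,\X]$ is shown to be close to $\sum_x L_x(\ell)\,u_x$. Here $L_x(\ell)$ are the local times of the jump chain and $u_x$ is a single per-site mean holding time. Even given $(H,\X)$, the holding times are not obviously nearly independent, since perturbing one clock shifts all later times; this too is harmless only because rates are frozen. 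A crude a priori bound $\s_i\le Ci$ does not supply any of this. What is needed is that the time between the first and the last visit to a site is $o$ of its first arrival time.

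\textbf{How the paper handles it.} This is exactly what the paper proves, and it is where $d\ge5$ really enters. It does not enter through $\sum_x G(x)^2<\infty$: your scheme only needs $o(\ell^2)$ expected intersections, which holds in every dimension.
\begin{enumerate}
\item Lemma~\ref{lg} gives $\hat\tau_n>an$ outside an exponentially small probability.
\item Lemma~\ref{slg} uses the last-exit tail \eqref{kapadis} to get $\kappa_n\le n^\d$ eventually, with $\d\in(\frac2{d-2},1)$. Such a $\d$ exists only when $d\ge5$. This gives $\hat\kappa_n\le\hat\tau_n^{\d'}$ with $\d'<1$ for all large $n$, almost surely.
\item Skorohod embedding and Brownian scaling then give the sandwich \eqref{c4} by i.i.d.\ variables $V^\pm_n(\ve)$ for all but finitely many sites. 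So the walk effectively moves in a static i.i.d.\ environment, and $\s_\ell$ is compared with $\sum_{i\le R_\ell}U^\pm_i\sum_{j\le G_i}\tilde Z^i_j$.
\item The ergodic theorem, $R_\ell/\ell\to p_d$, and \eqref{c12} (which again uses $d\ge5$) give the limit with no variance computation.
\end{enumerate}

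\textbf{Repairing your route.} If you import Lemma~\ref{slg}, with your integrability bounds to handle the exceptional events, your moment method can be completed. The mean follows by linearity, because the visit counts are independent of the per-site rates. The variances reduce to $\E\sum_x L_x(\ell)^2=O(\ell)$ plus the intersection count you describe.
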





\section{Proof of Proposition~\ref{lln}}
\label{proof-prop}

\setcounter{equation}{0}

For $n\geq0$, let us consider the range of $(\X_\ell)$\footnote{We will drop subindices from the notation of processes from now on; so in this case,
	we denote $(\X_n)_{n\geq0}$ simply by $(\X_n)$.} after $n$ steps, namely 
\begin{equation}\label{range}
	\cR_n=\{x\in\Z^d:\, \X_\ell=x\,\text{ for some }\,\ell\leq n\},
\end{equation}
and consider the sequence of {\em novelty times}, defined as follows. Let $\tau_0=0$ and for $n\geq1$
\begin{equation}\label{kappa}
	\tau_n=\inf\{\ell>\tau_{n-1}:\, \X_\ell\notin\cR_{n-1}\},
\end{equation}
so that 
\begin{equation}\label{yn}
	\cY_n:=\X_{\tau_n},\,n\geq0, 
\end{equation}
are the {\em new} sites visited by $(\X_\ell)$ in (chronological) order.
Let us next consider the {\em last visit times} to each of these  {\em new} sites, namely, 
\begin{equation}\label{omega}
	\o_n=\sup\{i\geq\tau_n:\, \X_j\ne\cY_n\,\text{ for all }\,j>i\}.
\end{equation}

\begin{observacao}\label{bc}
	Let
	\begin{equation}
		\kappa_n=\o_n-\tau_n,\,n\geq0.
	\end{equation}
	By Markovianity, $\kappa_n\sim\o_0$. The latter random variable, the last visit of $(\X_n)$ to the origin is, in $d\geq3$, well known to be distributed as follows:
	\begin{equation}\label{lastail}
		\Pr(\o_0=\ell)=\text{const }\Pr(\X_\ell=0),\,\ell\geq0,
	\end{equation}
	and thus, for $n\geq0$,
	\begin{equation}\label{kapadis}
		\Pr(\kappa_n=\ell)\sim\text{const }\ell^{-d/2}.
	\end{equation}

\end{observacao}

The strategy in the remainder of the proof is to argue that for all large times, the jump rate at the current position of $(X_t)$ is roughly the same 
as at the beginning of the first visit to that site, which is in turn roughly distributed as $\ga|\cN|^{-\a}$, where $\cN$ is a standard Gaussian random variable, 
independently from site to site. This roughly implies that the $\s_n$ may be roughly written as a sum 
over $\cR_n$ of products of iid random variables distributed as $\ga^{-1}|\cN|^{\a}$
and sums of a geometric number\footnote{counting the total number of visits to each site of $\cR_n$}  of iid standard exponential random variables. The well known behavior of $|\cR_n|$, combined with the Ergodic Theorem, would then yield the result.

\begin{observacao}\label{approx}
	The upshot is that $(Y_t)$ behaves (roughly) as a continuous time random walk in a {\em static} random field of iid jump rates with marginals distributed as $\ga|\cN|^{-\a}$. 
\end{observacao}

The work to control the approximations involved in this strategy relies on a fast enough decay of  the distribution of $\kappa_n$, which requires high dimensions, and on standard moderate deviation results for $(S^\cdot_n)$, combined with the slow decay of $\vf$.
There are two main ingredients for that: linear growth of arrival times at new sites, and sublinear bound on successive times between first and last visits to new sites; we devote the next subsections for that.



\subsection{Linear growth of arrival times at new sites}
\label{lingro}

For $n\geq0$, let $\hat\tau_n$ the time of first arrival by $(Y_t)$ at $\cY_n$, that is,
\begin{equation}\label{tau}
	\hat\tau_n=\inf\{t\geq0:\,Y_t=\cY_n\}.
\end{equation}

\begin{Remark}\label{taufin}
	It is perhaps quite clear that $\hat\tau_n$ is almost surely finite for every $n$, but let us give next a brief argument. 
	
	It is enough to show that jump times are all finite almost surely. Let us fix $t\geq0$ and $x\in\Z^d$. By Markovianess, given that $Y_t=x$, the probability that  the next jump time is infinite is given by 
	\begin{equation}\label{f1}
		\E\Big[\exp\big\{-\ga\int_{t}^\infty (W^x_{s^\ga})^{-\a}ds\big\}\Big].
	\end{equation}
	
	It suffices to show that the above  integral diverges in probability. Given $L>1$, we have that
	\begin{equation}\label{f2}
		\int_{t}^\infty (W^x_{s^\ga})^{-\a}ds\geq\int_{t+L}^{t+2L} (W^x_{s^\ga})^{-\a}ds\geq (M_{t,L})^{-\a} L,
	\end{equation}
	where $M_{t,L}=\max\big\{W^x_s:\,(t+L)^\ga\leq s\leq (t+2L)^\ga\big\}$.
	It is enough then to argue that  $M_{t,L},\,t>0,\,L>1$, are tight.
	We have that
	\begin{equation}\label{f3}
		M_{t,L}\leq1+(t+L)^{-\ga/2}
		\max_{0\leq s\leq (t+2L)^\ga}\big|S^x_{N^x_s}\big|.
	\end{equation}
	
	Kolmogorov's Maximal Inequality implies that the probability that the latter term on the right hand side exceeds an arbitrary $K>0$ is bounded above by 
	$2^\ga K^{-2}$, and tightness follows.

\end{Remark}

\begin{Lema}\label{lg}
	There exists constants $a,b>0$ such that for all large $n$
	\begin{equation}\label{exp}
		\Pr(\hat\tau_n\leq an)\leq e^{-bn}.
	\end{equation} 
\end{Lema}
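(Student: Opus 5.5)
Since $\hat\tau_n = \s_{\tau_n}$ and $\tau_n\ge n$, we have $\hat\tau_n\ge \s_n$. It therefore suffices to show that $\Pr(\s_n\le an)\le e^{-bn}$ for suitable $a,b>0$. This reduces the claim to a large deviation lower bound for the sum of the first $n$ holding times $\s_\ell-\s_{\ell-1}$ of $(Y_t)$. The plan is to dominate these holding times from below by iid variables with positive mean. To that end we need an upper bound on the jump rates $\rho^x_t=\ga(W^x_{t^\ga})^{-\a}$ that holds uniformly enough.

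\textbf{Step 1: bounding the rates.} Since $\eta^x_s\ge0$, we have $W^x_s\ge s^{-1/2}$, and so $\rho^x_t\le \ga t^{\a\ga/2}$. Here $\a\ga/2=\a/(2-\a)$, which may exceed $1$. This bound is useless for large $t$, but it does handle small times. A more convenient route is to work with the original process $X$, whose rates satisfy $\l^x_t=(\eta^x_t+1)^{-\a}\le1$ deterministically. The jump times of $X$ can therefore be coupled, via the Harris construction, with a rate-one Poisson clock. Concretely, thin a rate-one Poisson process: each jump of $X$ is a point of a rate-one Poisson process $\Pi$ (accepted with probability $\l$). This gives $T_n\ge E_1+\dots+E_n$, where $T_n$ is the time of the $n$th jump of $X$ and the $E_i$ are iid standard exponentials. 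Moreover $\s_n=T_n^{1/\ga}$, because $Y_t=X_{t^\ga}$.

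\textbf{Step 2: the large deviation estimate.} Cramér's bound gives $\Pr(E_1+\dots+E_n\le n/2)\le e^{-cn}$. On the complementary event, $\s_n\ge (n/2)^{1/\ga}$. Since $1/\ga=1-\a/2<1$, this is only sublinear growth, which is not enough. This is the main obstacle: the dilation $t\mapsto t^\ga$ compresses time. We therefore need the rates of $Y$ to be genuinely bounded along the path, using the fact that $W^x$ is of order one. The rate $\rho^x_t$ is large only when $\eta^x_{t^\ga}+1\ll t^{\ga/2}$, i.e. when the environment at $x$ is near $0$ at time $t^\ga$.

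\textbf{Step 3: handling the compression.} Fix a threshold $\epsilon>0$ and call a holding step \emph{good} if, on entering the current site $x$ at time $s\ge1$, we have $W^x_{u}\ge\epsilon$ for all $u\in[s^\ga,(s+1)^\ga]$. On a good step, the rate stays below $\ga\epsilon^{-\a}$ throughout $[s,s+1]$, so the holding time exceeds $\min(1,\mathrm{Exp}(\ga\epsilon^{-\a}))$. By the Markov property and independence of $\eta^x$ from the walk's history up to the entry time, the conditional probability that a step is good, given the past, is bounded below by some $p(\epsilon)>0$, uniformly in $s\ge1$ and in the state of $\eta^x$. This is the key estimate. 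Using Brownian scaling of $|S^x_{N^x_\cdot}|$ (diffusive scaling matches the factor $1/\sqrt u$), the probability that a birth-and-death chain started from any level stays above $\epsilon\sqrt u$ over a time window of length comparable to $u$ is bounded below, by invariance-principle and moderate-deviation estimates. The worst case is starting near $0$, where a positive probability of escaping to level $2\epsilon\sqrt u$ early in the window persists uniformly. For site revisits, the chain state is arbitrary, but the bound is uniform in the starting level.

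\textbf{Step 4: conclusion.} Granting Step 3, the number of good steps among the first $n$ stochastically dominates a $\mathrm{Bin}(n,p)$ variable, since the lower bound on conditional probabilities allows a martingale or Azuma-type comparison. Each good step contributes an independent lower-bounded holding time with mean $m>0$. Combining a Chernoff bound for the binomial with Cramér's bound for the truncated exponentials gives $\Pr(\s_n\le an)\le e^{-bn}$ for $a<pm/2$. Hence $\Pr(\hat\tau_n\le an)\le\Pr(\s_n\le an)\le e^{-bn}$.
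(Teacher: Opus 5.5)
\textbf{There is a genuine gap, and it is in your first reduction.} The inequality $\hat\tau_n\ge\sigma_n$ is correct, but it loses the feature that makes the lemma true. Before $\hat\tau_n$ the walk has made $n$ \emph{first} visits to distinct sites, and at a first visit the environment is fresh. Once you must control every holding time, including revisits, your Step 3 estimate is false.

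At a revisit of $x$ at time $s$, $\eta^x$ is not independent of the walk's past, since the earlier holding times at $x$ depended on it. The state $k=\eta^x_{s^\gamma}$ can be anything. If $k+1<\epsilon s^{\gamma/2}$, the requirement $W^x_u\ge\epsilon$ already fails at $u=s^\gamma$, so the conditional probability of a good step is $0$, not $\ge p(\epsilon)$. Escaping ``early in the window'' does not rescue this. The window $[s^\gamma,(s+1)^\gamma]$ has length $\asymp s^{\gamma-1}=o(s^\gamma)$, not a length comparable to $u$. A chain started near $0$ therefore reaches only height $O(s^{(\gamma-1)/2})=o(s^{\gamma/2})$, and the escape probability tends to $0$.

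In fact your reduced claim $\Pr(\sigma_n\le an)\le e^{-bn}$ fails for every $a>0$. Fix $\delta$ with $2\gamma^{-1}\delta^\alpha<a$.
\begin{itemize}
\item The jump chain stays in a box of side $r$ for $n$ steps with probability $\ge e^{-Cn/r^2}$.
\item Independently, each of the $\asymp r^d$ environments in the box satisfies $\eta_u+1\le\delta\sqrt u$ for all $u\in[u_0,(an)^\gamma]$ with probability at least $n^{-C_\delta}$. This follows from a dyadic-block (equivalently Ornstein--Uhlenbeck) argument.
\item On this event every rate after a fixed time is $\ge\gamma\delta^{-\alpha}$. Hence $\sigma_n\le C+\gamma^{-1}\delta^\alpha\sum_{\ell\le n}Z_\ell\le an$ with high probability.
\end{itemize}
Taking $r^{d+2}\approx n/\log n$ gives $\Pr(\sigma_n\le an)\ge\exp\{-Cn^{d/(d+2)}(\log n)^{2/(d+2)}\}$, which is not exponentially small. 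So no repair of Step 3 can save the reduction.

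What you need is the paper's argument, which uses only first-visit holding times: $\hat\tau_n\ge\sum_{i=0}^{n-1}\cE_i\ge\sum_{i=0}^{n-1}\1\{\cE_i>1\}$, with $\cE_i$ the holding time at the first visit to $\cY_i$. The history up to $\hat\tau_i$ involves only the jump chain, the clocks, and the environments at $\cY_0,\dots,\cY_{i-1}$. Given that history, the environment at $\cY_i$ has the unconditioned law of the chain started at $0$ at time $0$. Hence
\[
\Pr(\cE_i>1\mid\text{past})\ge p:=\inf_{t\ge0}\E\Big[\exp\Big\{-\gamma\int_t^{t+1}(W^0_{s^\gamma})^{-\alpha}ds\Big\}\Big].
\]
Positivity of $p$ is where your Step 3 intuition is right. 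For the unconditioned chain, $\min\{W^0_u:t^\gamma\le u\le(t+1)^\gamma\}$ converges in law to $|\cN|$ because the window is short compared with $t^\gamma$ (CLT plus Kolmogorov's maximal inequality); small $t$ is handled directly. Iterating $\E(e^{-B_n}\mid\F_n)\le\zeta e^{-B_{n-1}}$ with $\zeta=1-(1-e^{-1})p<1$, and then applying the exponential Chebyshev inequality, gives the lemma.
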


\begin{proof}[\textbf{Proof.}]
Let $\G_t=\s(Y_s,\eta^x_s;\,s\geq0,\,x\in\Z^d)$, $t\geq0$, be the (joint) filtration generated by $(Y_t)$ and $(\eta^x_t)$, and set $\F_n=\G_{\hat\tau_n}$, $n\geq0$.   We have that
\begin{equation}\label{ts1}
	\hat\tau_n\geq\sum_{i=0}^{n-1} \cE_i\geq\sum_{i=0}^{n-1} \1_{\{\cE_i>1\}}=:B_{n-1},\,n\geq1,
\end{equation}
where $\cE_i$ denotes the time elapsed since $\hat\tau_i$ until the next jump of $(Y_t)$. 
Notice that $\cE_i\in\F_n$ if $i<n$.
By a standard argument using an exponential Chebyshev inequality, it is enough to show that there exists $\zeta<1$ such that
\begin{equation}\label{lapb}
	\E(e^{- B_n})\leq\zeta^n,\,n\geq1.
\end{equation}

We next argue that~\eqref{lapb} follows from the following claim:
\begin{equation}\label{zeta}
	p:=\inf_{i,t\geq0,,x\in\Z^d}\Pr\big(\cE_i>1\big|\hat\tau_i=t,\cY_i=y\big)>0.
\end{equation}
Indeed, let us write
\begin{equation}\label{lapb1}
	\E(e^{- B_n}|\F_n)=e^{- B_{n-1}} \E(e^{-\1_{\{\cE_n>1\}}}|\F_n)=e^{- B_{n-1}} \big[1-(1-e^{-1})\Pr(\cE_n>1|\F_n)\big],
\end{equation}
$n\geq0$ (where $B_{-1}:=0$).
By Markovianess and spatial homogeneity,  the expression in square brackets is bounded above by $1-(1-e^{-1})p=:\zeta<1$. Applying this bound to the right hand side of~\eqref{lapb1}, and iterating, we establish~\eqref{lapb}.

In order to check~\eqref{lapb1}, let us first notice that, again by Markovianess and spatial homogeneity, 
\begin{equation}\label{ts2}
	p=\inf_{t\geq0}\,\E\Big[\exp\big\{-\ga\int_{t}^{t+1} (W^0_{s^\ga})^{-\a}ds\big\}\Big].
\end{equation}

If $t<1$, then the latter expectation is bounded  below by
\begin{equation}\label{ts3}
	\E\big[e^{-\ga'(m+1)^{-\a}}\big] 
\end{equation}
where $\ga'=\ga\int_{1}^{2} s^{\a/(2-\a)}\,ds=2^\ga-1$, and $m=\min_{0\leq s\leq2^\ga}|S^0_{N_s}|$; it readily follows that the expression in~\eqref{ts3} is a (strictly) positive constant.

If $t\geq1$, we 
find that the expectation on the right hand side of~\eqref{ts2} is bounded  below by
\begin{equation}\label{ts4}
	\E\big[e^{-\ga \,m_t^{-\a}}\big],
\end{equation}
where $m_t=\min\big\{W^0_s,\,t^\ga\leq s\leq(t+1)^\ga\big\}$, and to find an uniformly positive lower bound for~\eqref{ts4}, it is enough to argue that $\tilde m_t$ converges in distribution as $t\to\infty$ to a nontrivial (positive) random variable.
We have that
\begin{equation}\label{ts5}
	t^{-\ga/2}\min_{t^\ga\leq s\leq (t+1)^\ga}\big|S^0_{N^0_s}\big|
	\leq m_{t}
	\leq t^{-\ga/2}+t^{-\ga/2}\,\big|S^0_{N^0_{t^\ga}}\big|.
\end{equation}
The left hand side of~\eqref{ts5} is bounded below by 
\begin{equation}\label{ts6}
	t^{-\ga/2}\,\big|S^0_{N^0_{t^\ga}}\big| -t^{-\ga/2}\max_{t^\ga\leq s\leq (t+1)^\ga}\big|S^0_{N^0_s}-S^0_{N^0_{t^\ga}}\big|,
\end{equation}
and the CLT and Kolmogorov's Maximal Inequality imply that $m_{t}$ converges in distribution as $t\to\infty$ to $|\cN|^{-\a}$, 
where $\cN$ is a standard Gaussian random variable.
\end{proof}

\subsection{Sublinear bound on successive times between first and last visits to new sites}
\label{slingro}

For $n\geq0$, let $\hat\o_n$ the (end) time of the last visit by $(Y_t)$ at $\cY_n$, that is,
\begin{equation}\label{hom}
	\hat\tau_n=\sup\{t\geq0:\,Y_t=\cY_n\}.
\end{equation}

\begin{Lema}\label{slg}
	Let $\hat\kappa_n=\hat\o_n-\hat\tau_n$, $n\geq0$. If $d\geq5$, we may choose a constant $\d'<1$ such that 
	\begin{equation}\label{subexp}
		\sum_{n\geq0}\Pr(\hat\kappa_n> \hat\tau_n^{\d'})<\infty. 
	\end{equation} 	
\end{Lema}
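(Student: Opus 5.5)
We reduce the statement to a bound on the continuous time needed by $(Y_t)$ to perform the $\kappa_n$ discrete steps between $\tau_n$ and $\o_n$. Fix $\d'<1$, to be chosen close to $1$, and use Lemma~\ref{lg}. Since $\sum_n e^{-bn}<\infty$, it suffices to show
\begin{equation*}
\sum_{n\geq0}\Pr\big(\hat\kappa_n> (an)^{\d'},\,\hat\tau_n>an\big)<\infty .
\end{equation*}
Note that $\hat\kappa_n$ is exactly the time spent by $(Y_t)$ performing steps $\tau_n+1,\dots,\o_n$ of the jump chain. Pick an exponent $\beta\in(0,\d')$ and split according to whether $\kappa_n\leq n^{\beta}$ or not.

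\emph{Long excursion.} By~\eqref{kapadis}, $\Pr(\kappa_n>n^\beta)\leq C n^{-\beta(d/2-1)}$. For $d\geq5$ we have $d/2-1\geq 3/2$, so this is summable as soon as $\beta>2/3$. This is the only place where the dimension enters, and it dictates $\d'>\beta>2/3$.

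\emph{Short excursion.} On $\{\kappa_n\leq n^\beta,\ \hat\tau_n>an\}$, the walk makes at most $n^\beta$ jumps after time $an$, each from one of at most $n^\beta+1$ sites $x$ within distance $n^\beta$ of $\cY_n$. We need each such holding time to be at most $n^{\d'-\beta}/2$, say, with summable failure probability.

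Conditionally on the past (Markov property, as in the proof of Lemma~\ref{lg}), the probability that a holding time started at $t\geq an$ at site $x$ exceeds $h:=n^{\d'-\beta}/2$ is
\begin{equation*}
\E\Big[\exp\Big\{-\ga\int_t^{t+h}(W^x_{s^\ga})^{-\a}ds\Big\}\Big].
\end{equation*}
This is small unless $W^x$ is large on $[t^\ga,(t+h)^\ga]$. Uniformly in $t\geq an$ and for $h\leq t$, we have $W^x_{s^\ga}\leq 1+ C\,t^{-\ga/2}\max_{u\leq (2t)^\ga}|S^x_{N^x_u}|$. The moderate deviation bound for the simple random walk $S^x$ and for $N^x$ gives
\begin{equation*}
\Pr\Big(\max_{u\leq(2t)^\ga}|S^x_{N^x_u}|>t^{\ga/2}\log t\Big)\leq e^{-c\log^2 t}.
\end{equation*}
Off that event the integrand is at least $c(\log t)^{-\a}$, so the conditional probability is at most $\exp\{-c\,h(\log n)^{-\a}\}$.

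The delicate point is that the holding times start at random times $t$. To handle this, take a union bound over the at most $n^{\beta}$ steps, over the at most $n^\beta+1$ relevant sites, and over $t$ in a grid. Alternatively, and more simply, control $\sup_{s\leq T^\ga}|S^x_{N^x_s}|$ for all relevant $x$ at once, where $T$ is a polynomial in $n$. Here $T$ bounds all times involved: on the good event the accumulated time up to the end of the excursion is at most $\hat\tau_n+n^{\d'}$, and $\hat\tau_n$ itself is at most polynomial in $n$ with overwhelming probability by the same estimate applied to earlier steps.

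The resulting bound is roughly $n^{2\beta}\big(e^{-c\log^2 n}+e^{-c n^{\d'-\beta}(\log n)^{-\a}}\big)$, which is summable. Combining the two cases and choosing $2/3<\beta<\d'<1$ proves~\eqref{subexp}. I expect the main obstacle to be the short-excursion step: making the conditioning on random starting times rigorous and obtaining the polynomial a priori bound on $\hat\tau_n$ and its uniformity in $x$.
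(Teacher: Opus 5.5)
Your overall strategy is the paper's. You discard $\{\kappa_n>n^\beta\}$ using the $\ell^{-d/2}$ tail, which is where $d\ge5$ enters through $2/(d-2)<\beta<1$. You discard $\{\hat\tau_n\le an\}$ by Lemma~\ref{lg}. You then bound each of the at most $n^\beta+1$ holding times in the excursion, using a moderate-deviation bound on $S^x_{N^x_\cdot}$ to get a lower bound on the rate. The step you leave open, however, comes from your choice of normalization, and as written it is not proved.

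\emph{The unproved a priori bound.} Replacing the threshold $\hat\tau_n^{\d'}$ by $(an)^{\d'}$ forces you to cap holding times by a power of $n$. Meanwhile your rate lower bound $c(\log t)^{-\a}$ degrades with the start time $t$, which is not bounded above. Hence you need $\hat\tau_n\le\mathrm{poly}(n)$ with summable failure probability. That claim is plausible, but ``the same estimate applied to earlier steps'' does not give it. Your estimate requires $t\ge an$ and $h\le t$. Earlier holding times may start near time $0$ with caps $h\gg t$, and there $W$ must be controlled at scale $\max(t,h)$, since $W^x_{s^\ga}$ blows up as $s\to0$. You would also need a polynomial bound on $\tau_n$ itself.

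\emph{The ``more simple'' alternative.} It does not work as stated. A single bound $\sup_{s\le T^\ga}|S^x_{N^x_s}|\le T^{\ga/2}\log T$ only yields $W^x_{s^\ga}\lesssim (T/an)^{\ga/2}\log T$ for $s\ge an$, which is polynomially large once $T\gg n$. You need a time-dependent envelope over dyadic blocks, i.e.\ your grid version.

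\emph{A constant.} $n^\beta\cdot n^{\d'-\beta}/2\le(an)^{\d'}$ fails if $a^{\d'}<1/2$, so adjust the cap.

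\emph{How the paper avoids this.} With $t_0=\hat\tau_n$, the paper caps each holding time by $t_0^{c}$ with $c<1-\d$. Then every start time in the excursion lies in $[t_0,2t_0]$ for large $t_0$. The per-step failure probability in~\eqref{se5} is decreasing in $t_0$, hence at most its value at $an$. Finally, $\hat\kappa_n\le\kappa_n t_0^{c}\le n^{\d}\hat\tau_n^{c}<a^{-\d}\hat\tau_n^{\d+c}$ because $n<\hat\tau_n/a$. So a large $\hat\tau_n$ only helps. If you keep $\hat\tau_n^{\d'}$ as the threshold and adopt this normalization, your dyadic-grid argument closes without any upper bound on $\hat\tau_n$.

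\emph{Where your argument is more careful than the paper's.} Your good environment event is uniform in the start time. It ranges over the at most $n^\beta+1$ sites of the excursion, which are determined by the jump chain and hence independent of $H$. Combined with the representation $\s_\ell-s=\W^{-1}_{y,s}(Z_\ell)$, with $Z_\ell$ independent of everything else, it handles revisited sites. The paper instead asserts in~\eqref{se6} that the conditional law of the environment at the current site is the unconditioned one. That is not literally exact when the excursion revisits a site whose environment has already influenced earlier holding times, or $\hat\tau_n$ itself.
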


\begin{proof}[\textbf{Proof}]
We write
\begin{equation}\label{se1}
	\hat\kappa_n=\sum_{j=1}^{\kappa_n}\cE^n_j,
\end{equation} 	
where $\cE^n_1$ is the duration of the visit by $(Y_t)$ to $\cX_{\tau_n}$ starting at $\T^n_0:=\hat\tau_n$, and, inductively, for $j\geq2$,
$\cE^n_j$  is the duration of the visit by $(Y_t)$ to $\cX_{\tau_n+j-1}$ starting at $\T^n_{j-1}:=\hat\tau_n+\sum_{i=1}^{j-1}\cE^n_i$. 

By~\eqref{kapadis}, we may assume that $\kappa_n\leq n^\d$ for $\d\in\big(\frac2{d-2},1\big)$ fixed.
We may also assume, by Lemma~\ref{lg}, that $\tau_n>an$. 

Let us fix $c\in(0,1-\d)$. By Markovianess and spatial homogeneity, 
\begin{align}\label{se2}
	\Pr\big(\cE^n_1>t_0^c\big|\hat\tau_n=\T^n_{0}=t_0,Y_{t_0}=y\big) &=\E\Big[\exp\big\{-\ga\int_{t_0}^{t_0+t_0^c} (W^0_{s^\ga})^{-\a}ds\big\}\Big]
	\nonumber \\ &\leq \E\big(e^{-\ga t_0^c(M_{t_0})^{-\a}}\big),
\end{align}
where 
$M_t=\max\big\{W^0_s:\,t^\ga\leq s\leq (t+t_0^{c+\d})^\ga\big\}$, $t\geq0$.

Let us set $c'=c/(2+\a)$. Given $t_0>an$, we have that
\begin{align}\label{se3}
	\Pr\big(M_{t_0}>t_0^{c'}\big)&\leq\Pr\Big(\max_{t_0^\ga\leq s\leq (t_0+t_0^{c+\d})^\ga}|S^0_{N^0_s}|>t_0^{\ga/2} t_0^{c'}-1\Big)
	\nonumber \\ &= \Pr\Big(\max_{N^0_{t_0^\ga}\leq k\leq N^0_{(t_0+t_0^{c+\d})^\ga}}|S^0_{k}| >t_0^{\ga/2} t_0^{c'}-1\Big).
\end{align}
Resorting to standard large deviation bounds for $(N^0_\cdot)$, it is enough to bound 
\begin{equation}\label{se4}
	\Pr\Big(\max_{\frac12t_0^\ga\leq k\leq \frac32t_0^\ga}|S^0_{k}|>t_0^{\ga/2} t_0^{c'}-1\Big)\leq e^{-c''t_0^{2c'}},
\end{equation}
where $c''>0$ is a constant, and the latter inequality follows from the union bound and  Bernstein Inequality. It follows that
\begin{equation}\label{se5}
	\Pr\big(\cE^n_1>t_0^c\big|\T^n_{0}=t_0,Y_{t_0}=y\big)\leq e^{-\ga t_0^{c-\a c'}}+e^{-c''t_0^{2c'}}\leq 2e^{-\ga''t_0^{2c'}},
\end{equation}
where $\ga''=\ga\wedge c''$. 

Proceeding inductively, we may repeat the latter approach to find that, for $1\leq j<\lfloor n^\d\rfloor$,
we have that, for $t_0\leq t_j\leq t_0+jt_0^c$,
\begin{align}\label{se6}
	\Pr\big(\cE^n_{j+1}>t_0^c\big|\hat\tau_n=t_0,\T^n_{j}=t_j,Y_{t_j}=y\big) &= \E\Big[\exp\big\{-\ga\int_{t_j}^{t_j+t_0^c} (W^0_{s^\ga})^{-\a}ds\big\}\Big]
	\nonumber \\ 
    &\leq \E\big(e^{-\ga t_0^c(M_{t_j})^{-\a}}\big),
\end{align}
and the bound on the right hand side of~\eqref{se5} follows for the left hand side of~\eqref{se6} with a similar argument as above.

It follows that outside of an event of probability bounded above by $2n^\d e^{-c''n^{2ac'}}$ (on account of the above
estimates on  $\cE^n_j$, $1\leq j\leq n^\d$), combined with the estimates of the probability of $\{\kappa_n>n^\d\}$, from~\eqref{kapadis},
and of  $\{\tau_n\leq an\}$, from Lemma~\ref{lg}, we have that, outside of an event with summable probability,
\begin{equation}\label{se7}
	\hat\kappa_n\leq n^{\d+c},
\end{equation} 	
and the result follows, using again Lemma~\ref{lg}, for any choice $\d'\in(\d+c,1)$.
\end{proof}

\subsection{Conclusion of proof of Proposition~\ref{lln}}
\label{conc}

We will fix $(\cX_n)$ and resort to a convenient construction of $\{(\eta^{\cY_n}_t),\,n\geq0\}$, which is the relevant environment,
as follows. By Skorohod representation, we may write $(S^{\cY_n}_\ell)$ for each $n\geq0$ as
\begin{equation}\label{c1}
	S^{\cY_n}_\ell=\B_n(T^n_\ell),\,\ell\geq0,
\end{equation} 
where $\B_n$, $n\geq0$, are independent standard Brownian motions, and $(T^n_\ell)$, $n\geq0$, are iid sequences of
iid non negative random variables with mean 1 and a positive exponential moment. 

It is also quite clear that, for $n\geq0$, the relevant environment in $\cY_n$ is the one between times $\hat\tau_n^\ga$ and
$(\hat\tau_n+\hat\kappa_n)^\ga$, namely
\begin{equation}\label{c2}
	(\eta^{\cY_n}_t)\big|_{\hat\tau_n^\ga}^{(\hat\tau_n+\hat\kappa_n)^\ga}
	=\left.\left(\frac{|\B_n(T^n_{N^{n}_t})|+1}{\sqrt t}\right)\right|_{\hat\tau_n^\ga}^{(\hat\tau_n+\hat\kappa_n)^\ga}
	=\left.\left(\Big|\frac{\B_n(T^n_{N^{n}_t})}{\sqrt t}\Big|+\frac1{\sqrt t}\right)\right|_{\hat\tau_n^\ga}^{(\hat\tau_n+\hat\kappa_n)^\ga},
\end{equation}
where $(N^n_t)=(N^{\cY_n}_t)$, $n\geq0$.
We  now write the the first term within parentheses on the right hand side, for $n\geq1$, as
\begin{equation}\label{c3}
	\Big|\frac{\B_n(T^n_{N^{n}_t})}{\hat\tau_n^{\ga/2}}\Big|\times  \sqrt{\frac{\hat\tau_n^{\ga}}t}    
	=\big|\tilde \B_n(\tilde T^n_{t})\big|\times \sqrt{\frac{\hat\tau_n^{\ga}}t},
\end{equation}
where $\tilde T^n_{t}=T^n_{N^{n}_t}/\hat\tau_n^\ga$, $t\geq\hat\tau_n^\ga$, and $\tilde \B_n$, $n\geq1$, are independent standard Brownian motions. The latter equality follows from well known scale invariance of Brownian motion, once we observe that $\hat\tau_n$ and $\B_n$ are independent.

Now, for $n\geq0$, let 
\begin{eqnarray*}
	W^-_n\=\min\big\{W^{\cY_n}_t:\,\hat\tau_n^\ga\leq t\leq(\hat\tau_n+\hat\kappa_n)^\ga\big\},\\
	W^+_n\=\max\big\{W^{\cY_n}_t:\,\hat\tau_n^\ga\leq t\leq(\hat\tau_n+\hat\kappa_n)^\ga\big\};
\end{eqnarray*}
and, given $\ve\in(0,1)$, set
\begin{eqnarray*}
	V^-_n\,=\,V^-_n(\ve)\=(1-\ve)\min\!\big\{|\tilde \B_n(t)|:\,1-\ve\leq t\leq1+\ve\big\},\\
	V^+_n\,=\,V^+_n(\ve)\=\ve+\max\!\big\{|\tilde \B_n(t)|:\,1-\ve\leq t\leq1+\ve\big\}.
\end{eqnarray*}

From standard large deviation estimates for Poisson processes and sums of iid random variables with a positive exponential moments,
as well as Lemmas~\ref{lg} and~\ref{slg} above, we have that, except for an event whose probability is summable in $n$,
\begin{equation}\label{c4}
	V^-_n\leq W^-_n\leq W^+_n\leq V^+_n,
\end{equation}
and thus there exists an almost surely finite random variable $\tilde N$ such that~\eqref{c4} holds for all $n\geq\tilde N$.

Let us at this point resort to the following construction of the jump times of $(Y_t)$. Let $Z_1,Z_2,\ldots$ be a sequence of independent standard exponential random variables, independent from $(\cX_\ell)$ and $H$.
Recall  the notation for the successive jump times  $Y_t$: $\s_0=0,\s_1,\s_2,\ldots$. Then for $\ell\geq1$, given that $\s_{\ell-1}=s$ and $Y_{\s_{\ell-1}}=y$,
\begin{equation}\label{c5}
	\s_\ell-s=\W^{-1}_{y,s}(Z_\ell),
\end{equation}
where $\W^{-1}_{y,s}(\cdot)$ is the inverse of $\W_{y,s}(\cdot)=\ga\int_s^{s+\cdot}(W^y_{t^\ga})^{-\a}\,dt$. 

It follows that  there exists an almost surely finite random variable $\tilde L$ such that~\eqref{c4} holds for all $\ell\geq\tilde L$,
and thus for  $\ell\geq\tilde L$ we have that
\begin{equation}\label{c6}
	U_{n_\ell}^-\,Z_\ell:=\ga^{-1}(V^-_{n_\ell})^{\a}\, Z_\ell\leq	\s_\ell-\s_{\ell-1}\leq \ga^{-1}(V^+_{n_\ell})^{\a}\, Z_\ell=:U_{n_\ell}^+\,Z_\ell,
\end{equation}
where $n_\ell$ is such that $Y_{\s_\ell}=\cY_{n_\ell}$, so it is enough to show that
\begin{equation}\label{c7}
	\liminf_{\ell\to\infty}\frac1\ell\sum_{i=1}^\ell U_{n_\ell}^-\,Z_\ell\geq\C_\ve^-;\quad
	\limsup_{\ell\to\infty}\frac1\ell\sum_{i=1}^\ell U_{n_\ell}^+\,Z_\ell\leq\C_\ve^+;
\end{equation}
in probabilty, where $\C_\ve^-,\,\C_\ve^+$ are deterministic and $\lim_{\ve\to0}\C_\ve^-=\lim_{\ve\to0}\C_\ve^+$ is nontrivial. Let us consider the latter sum. It may be written as 
\begin{equation}\label{c8}
	\sum_{i=1}^{R_\ell} U_{i}^+\sum_{j=1}^{G_{i,\ell}}\,\tilde Z^i_j\leq \sum_{i=1}^{R_\ell} U_{i}^+\sum_{j=1}^{G_{i}}\,\tilde Z^i_j,
\end{equation}
where $R_\ell=|\cR_\ell|$ is the number of new sites visited by $(\cX_n)$ in the first $\ell$ steps, $G_{i,\ell}$ is the number of visits by
$(\cX_n)$ to $\cY_i$ in the first $\ell$ steps, $G_i=\lim_{\ell\to\infty}G_{i,\ell}$, and $\tilde Z^i_j$, $i,,j\geq1$, are indepedent exponential random variables. 

We note that $G_1,G_2,\ldots$ is a stationary ergodic sequence, and $G_1$ is a Geometric random variable with success parameter $p_d=\Pr(\cX_n\ne0,\,n\geq1)$ and mean $p_d^{-1}$. Also, it is a well known fact that 
\begin{equation}\label{c9}
	\lim_{\ell\to\infty}\frac{R_\ell}\ell \to p_d
\end{equation}
almost surely. From this and the Ergodic Theorem, using also the fact that $U^+_1, U^+_2,\ldots$ are iid and integrable, we find that the limsup in~\eqref{c7} is bounded above by
\begin{equation}\label{c10}
	\lim_{\ell\to\infty}\frac1\ell\sum_{i=1}^{R_\ell} U_{i}^+\sum_{j=1}^{G_{i}}\,\tilde Z^i_j=\E(U^+_1)=:\C_\ve^+
\end{equation}
almost surely.

To deal with the liminf in~\eqref{c7}, we write the respective sum as 
\begin{equation}\label{c11}
	\sum_{i=1}^{R_\ell} U_{i}^-\sum_{j=1}^{G_{i,\ell}}\,\tilde Z^i_j 
	\geq \sum_{i=1}^{R_{(1-\ve)\ell}} U_{i}^-\sum_{j=1}^{G_{i,\ell}}\,\tilde Z^i_j
\end{equation}

Let now $A_{\ell,\ve}=\big\{G_{i,\ell}\ne G_i$ for some $i\leq R_{(1-\ve)\ell}\big\}$. 
Resorting to a straighforward union bound, we find that
\begin{equation}\label{c12}
	\Pr(A_{\ell,\ve})\leq\ell\,\Pr(\o_0>\ve\ell)\leq \text{const }\ell^{-(d-4)/2}\to0,\,\text{ as }\ell\to\infty,
\end{equation}
where we have used~\eqref{kapadis} in the latter inequality.

Since the right hand side of~\eqref{c11} equals $\sum_{i=1}^{R_{(1-\ve)\ell}} U_{i}^-\sum_{j=1}^{G_{i}}\,\tilde Z^i_j$ in $A_{\ell,\ve}$, we conclude, arguing as for~\eqref{c10}, that the liminf in~\eqref{c7} is bounded below by
\begin{equation}\label{c13}
	\lim_{\ell\to\infty}\frac1\ell\sum_{i=1}^{R_{(1-\ve)\ell}} U_{i}^-\sum_{j=1}^{G_{i}}\,\tilde Z^i_j=(1-\ve)\E(U^-_1)=:\C_\ve^-
\end{equation}
in probability {\em for a.e~realization of the environment}.

Dominated convergence, relying on the continuity of Brownian motion, then yields that
\begin{equation}\label{c14}
	\lim_{\ve\to0}\C_\ve^-=\lim_{\ve\to0}\C_\ve^+=\ga^{-1}\E(|\tilde\B_0(1)|^\a)=:\mu,
\end{equation}
and the proof is complete.

\begin{Remark}\label{muvalue}
	
	From a computation of the left hand side of the latter equality of~\eqref{c14}, we find that
	\begin{equation}\label{c15}
		\mu=\Big(1-\frac\a2\Big)\,\frac{2^{\a/2}}{\sqrt\pi}\,\,\Gamma\Big(\frac{1+\a}2\Big),
	\end{equation}
	where $\Gamma(\cdot)$ is the Gamma function.
\end{Remark}

\begin{Remark}\label{trans}
	The same approach used above to establish Theorem~\ref{main} can be used for the transient case of the environment~\footnote{Meaning specifically a spatially homogeneous birth-and-death processes
    with positive mean jump size.}, with a restriction on $\a$, and a simpler implementation, which covers
    all dimensions. We claim that, in the transient case, if $\a\in(0,1)$, then setting
    $\hxd=(\hxd_t)_{t\geq0}$, where 
	\begin{equation}\label{hxd}
		\hxd_t=\d^{\frac{1-\a}{2}}X_{\d^{-1}t},\, t\geq0,
	\end{equation}
	we have that for a.e.~realization of the environment
	\begin{equation}\label{t1} 
		\xd\Rightarrow
		\big(B_{t^{{1-\a}}}\big)_{t\geq0} \text{ as } \d\to0,
	\end{equation}
	where 
	$(B_t)_{t\geq0}$ is Brownian motion with diffusion coefficient $\ga/\hat\mu^\a$, where $\hat\mu$ is the (positive) mean jump size of the underlying birth-and-death processes (off the origin).

    The setup of Section~\ref{proofs} works mutatis mutandis as well for this case. The crucial version of 
    Proposition~\ref{lln} is easier to argue, on account of the law of large numbers for the rates seen by the random walk, which converge (strongly) to a deterministic value (namely, $\ga (\hat\mu)^{-\a}$).
We leave the fairly straightforward details to the interested reader.

\end{Remark}

%






\bibliographystyle{abbrv}
\bibliography{bibliografia}  




\end{document}